\newcommand{\bburl}[1]{\textcolor{blue}{\url{#1}}}
\newtheorem{thm}{Theorem}[section]
\newtheorem{cor}[thm]{Corollary}
\newtheorem{lem}[thm]{Lemma}
\newtheorem{prop}[thm]{Proposition}
\newtheorem{rek}[thm]{Remark}
\DeclareFixedFont{\ttb}{T1}{txtt}{bx}{n}{12} % for bold
\DeclareFixedFont{\ttm}{T1}{txtt}{m}{n}{12}  % for normal
\definecolor{deepblue}{rgb}{0,0,0.5}
\definecolor{deepred}{rgb}{0.6,0,0}
\definecolor{deepgreen}{rgb}{0,0.5,0}
\newcommand\pythonstyle{\lstset{
language=Python,
basicstyle=\ttm,
morekeywords={self},              % Add keywords here
keywordstyle=\ttb\color{deepblue},
emph={MyClass,__init__},          % Custom highlighting
emphstyle=\ttb\color{deepred},    % Custom highlighting style
stringstyle=\color{deepgreen},
frame=tb,                         % Any extra options here
showstringspaces=false
}}
\newcommand\pythoninline[1]{{\pythonstyle\lstinline!#1!}}
\numberwithin{equation}{section}
\DeclareFontFamily{U}{mathx}{}
\DeclareFontShape{U}{mathx}{m}{n}{<-> mathx10}{}
\DeclareSymbolFont{mathx}{U}{mathx}{m}{n}
\DeclareMathAccent{\widehat}{0}{mathx}{"70}
\DeclareMathAccent{\widecheck}{0}{mathx}{"71}
\begin{document}

\title{The Best Two-Term Underapproximation Using Numbers From Fibonacci-Type Sequences}

\author{Shiliaev Mark}

\begin{abstract}
    This paper studies the greedy two-term underapproximation of $\theta\in (0,1]$ using reciprocals of numbers from a Fibonacci-type sequence $(c_n)_{n=1}^\infty$. We find the set of $\theta$ whose greedy two-term underapproximation is the best among all two-term underapproximations using $1/c_n$'s. We then derive a neat description of the set when $(c_n)_{n=1}^\infty$ is the Fibonacci sequence or the Lucas sequence. 
\end{abstract}

\email{\textcolor{blue}{\href{mailto:mshiliaev@tamu.edu}{mshiliaev@tamu.edu}}}
\address{Department of Mathematics\\ Texas A\&M University, College Station, TX 77843, USA}

\subjclass[2020]{11A67, 11B99}

\keywords{Two-term underapproximation; greedy algorithm; Fibonacci sequence; Egyptian fraction}

\thanks{The author would like to thank Dr. H. V. Chu, Department of Mathematics, Texas A\&M University, for introducing the author to this project and for his feedback on the research progress.} 

\maketitle

\keywords{}

\maketitle

%%%%%%%%%%%%%%%%%%%%%%%%%%%%%%%%%%%%%%%%%%%%%%%%%%%%%%%%%%%%%%%%%%%%%%%%%%%%%%%%%%%%%%%%%%%%%%%%%%%%%%%%%%%%%%%%%%%%%%%%%%%%%%%%%%%%%%%%%%%%%%%%%%%%%%%%%%%%%%%%%%%%%%%%%%%%%%%%%%%%%%%%%%%%%%%%%%%%%%%%%%%%%%%%%%%%%%%%%%%%%%%%%%%%%%%%%%%%%%%%%%%%%%%%%%%%%%%%%%%%%%%%%%%%%%%%%%%%%%%%%%%%%%%%%%%%%%%%%%%%%%%%%%%%%%%%%%%%%%%%%%%%%%%%%%%%%%%%%%%%%%%%%%%%%%%%%%%%%%%%%%%

\section{Introduction and main results}

Let $\theta$ be a number in $(0, 1]$. An \textit{underapproximation} of $\theta$ is a series $\sum_{n=1}^{\infty}1/a_{n}$, where $a_n$'s are positive integers and $\sum_{n=1}^{\infty}1/a_{n} \leq \theta$. In this paper, we shall restrict ${a_n}$'s to a subset $A$ of the natural numbers. In particular, a \textit{greedy} underaproximation of $\theta$ over a set $A \subseteq \mathbb{N}$ is formed stepwise, where at each step, we pick the smallest number $n$ in ${A}$ such that 
 $1/n$ is smaller than the remainder after the previous step.
Formally, we define a function $G: (0, 1] \rightarrow \mathbb{N} \mbox{ as } G(\theta) =  \lfloor 1/\theta \rfloor .$ We then obtain a sequence  $(c_n)_{n=1}^\infty$ recursively as follows: 
\\
$$c_1 = \min\{a \in A: a> G(\theta)\} \mbox{ and } c_n= \min \left\{ a \in A: a>G\left(\theta - \sum_{m=1}^{n-1}\frac{1}{c_m}\right)\right\}.$$ Consequently, the greedy underapproximation of $\theta$ over $A$ is $\sum_{n=1}^\infty1/c_n$

If $A = \mathbb{N}$, we get the classical greedy underapproximations, which have been studied in \cite{1, be, 6, 3, d, lls, n}. A natural question is whether the greedy algorithm gives the best finite underapproximation $\sum_{n=1}^{m}1/c_n, m \in \mathbb{N}$, at each step. One may focus on two-term underapproximations as in \cite{4, 2, n}, since they are the first non-trivial approximations resulting from the greedy process. Surprisingly, two-term underapproximations can be used to prove results about the long-term behavior of greedy underapproximations. For example, Kovač \cite{2} proved that the Lebesgue measure of all real numbers whose finite greedy underapproximations, are eventually the best is 0. 

If $A$ consists of powers of 2, then the greedy process would give the best finite underapproximation out of all finite underapproximations using integers from $A$ at every step, due to how sparse the set $A$ is. We would like to know what happens if $A$ is less dense than $\mathbb{N}$ but denser than the powers of 2. A good candidate for an investigation is the Fibonacci sequence $(F_n)_{n=0}^\infty$ defined as: $F_0 = 0$, $F_1 = 1$, $F_{n} = F_{n-1} +F_{n-2}$ for $n\ge 2$. We shall consider more general integer sequences $(a_n)_{n=0}^\infty$ satisfying:

\begin{enumerate}
\item[a)] $a_0 > 0$, $a_1\ge 1$, $a_0 \leq a_1$, 
\item[b)] $a_{n} = a_{n-1} + a_{n-2}$ for $n\ge 2$,
\item[c)] $\chi :=  a_0^2+a_1a_0-a_1^2 > 0$.
\end{enumerate}
From Condition a) and b), we obtain 
$$1\le a_1\ \ <\ a_2 \ <\ a_3\ <\ a_4 \ <\ \cdots.$$

For $\theta\in (0,1]$, let $g_1(\theta)$ be the smallest index $n \ge 1$ such that  
$$\frac{1}{a_{g_1(\theta)}}\ <\ \theta.$$

Let $g_2(\theta)$ be the smallest index $n \ge g_1(\theta)$ such that 
$$\frac{1}{a_{g_2(\theta)}}\ <\ \theta - \frac{1}{a_{g_1(\theta)}}.$$
Set 
$$\mathcal{G}(\theta)\ :=\ \frac{1}{a_{g_1(\theta)}} +\frac{1}{a_{g_2(\theta)}},$$
called the greedy two-term underapproximation of $\theta$ using terms from the sequence $a_1, a_2, a_3, \ldots$.
Our goal is to determine when $\mathcal{G}(\theta)$ is the best two-term underapproximation of $\theta$ out of all possible underapproximations from the collection
$$\mathcal{A}(\theta, 2)\ =\ \left\{\frac{1}{a_m} +\frac{1}{a_n}\,:\, 1\le m \leq n\mbox{ and }\frac{1}{a_m} +\frac{1}{a_n} < \theta\right\}.$$

\begin{thm}\label{m1} Let $\theta \in (0, 1]$.
The greedy two-term underapproximation $\mathcal{G}(\theta)$ is the best underapproximation out of $\mathcal{A}(\theta, 2)$ if and only if 
$$\theta \ \notin\ \bigcup_{n = 0}^{\infty}\left(\frac{1}{a_{2n+3}}+\frac{1}{a_{2n+4}}, \frac{1}{a_{2n+2}} + \frac{1}{a_{2n+3 + \xi(n)}}\right],$$
where $\xi(n)$ is the largest nonnegative integer such that
\begin{equation}\label{e6}a_{2n+2}F_{\xi(n)} +a_{2n+3}F_{\xi(n)+1}\ \le\ \frac{a_{2n+2}a_{2n+3}a_{2n+4}}{a_0^2+a_1a_0-a_1^2}.\end{equation}
\end{thm}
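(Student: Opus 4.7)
The plan is to classify every candidate sum in $\mathcal{A}(\theta,2)$ by its smaller index $m$ and to show that only a single family of competitors can threaten the greedy value. Throughout, set $p := g_1(\theta)$, so $\mathcal{G}(\theta) = 1/a_p + 1/a_{g_2(\theta)} \ge 1/a_p$. First I would carry out two elementary reductions. For $m \ge p+2$, any candidate satisfies $1/a_m + 1/a_n \le 2/a_{p+2} < 1/a_p$ (the last inequality is equivalent to $a_p < a_{p+1}$, which holds for $p \ge 1$), so these pairs cannot beat $\mathcal{G}(\theta)$. For $m = p$ the definition of the greedy process already gives the maximum, so it remains to study $m = p+1$. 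The largest such candidate is $1/a_{p+1} + 1/a_{l^*}$ with $l^*$ the smallest admissible second index; if $l^* \ge p+3$ then $1/a_{p+1} + 1/a_{l^*} \le 1/a_{p+1} + 1/a_{p+3} < 1/a_p$, ruling out such candidates. Consequently the sole potential threat is the sum $1/a_{p+1} + 1/a_{p+2}$, which lies in $\mathcal{A}(\theta,2)$ iff $\theta > 1/a_{p+1} + 1/a_{p+2}$.

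Next I would execute a parity dichotomy. A Catalan-type identity $a_m a_{m+3} - a_{m+1} a_{m+2} = (-1)^m \chi$ (proved by induction from the definition of $\chi$) yields, after dividing through by $a_p a_{p+1} a_{p+2}$,
$$\frac{1}{a_{p+1}} + \frac{1}{a_{p+2}} - \frac{1}{a_p} \;=\; \frac{(-1)^p \chi}{a_p a_{p+1} a_{p+2}}.$$
For $p$ odd the right side is negative, so the sole candidate is below $1/a_p \le \mathcal{G}(\theta)$ and the greedy value wins automatically; no bad $\theta$ arises. For $p = 2n+2$ even the right side is positive, and $1/a_{p+1} + 1/a_{p+2}$ beats $\mathcal{G}(\theta) = 1/a_p + 1/a_{g_2(\theta)}$ precisely when $a_{g_2(\theta)} > a_p a_{p+1} a_{p+2}/\chi$.

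Finally I would translate this to the stated interval. The standard closed form for generalised Fibonacci sequences gives $a_{2n+2} F_{\xi(n)} + a_{2n+3} F_{\xi(n)+1} = a_{2n+3+\xi(n)}$, so the inequality in~(\ref{e6}) is precisely $a_{2n+3+\xi(n)} \le a_p a_{p+1} a_{p+2}/\chi < a_{2n+4+\xi(n)}$. Hence $a_{g_2(\theta)} > a_p a_{p+1} a_{p+2}/\chi$ is equivalent to $g_2(\theta) \ge 2n+4+\xi(n)$, and unravelling the definition of $g_2$ converts this to $\theta \le 1/a_{2n+2} + 1/a_{2n+3+\xi(n)}$. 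Combining with the validity constraint $\theta > 1/a_{2n+3} + 1/a_{2n+4}$ produces exactly the bad interval, and taking the union over $n \ge 0$ finishes both implications. The main obstacle I anticipate is the first-reduction claim $1/a_{p+1} + 1/a_{p+3} < 1/a_p$, which must hold uniformly for all relevant $p$; the cleanest route is to derive it from the same Catalan-type identity together with the sign of $\chi$. A secondary bookkeeping point is to verify that $a_{g_2(\theta)} \ge a_{2n+4+\xi(n)} \Leftrightarrow \theta \le 1/a_{2n+2} + 1/a_{2n+3+\xi(n)}$ is a genuine biconditional, which requires care with the strict and weak inequalities built into the definition of $g_2$.
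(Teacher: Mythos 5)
Your proposal follows essentially the same route as the paper's proof: reduce every competitor to the single threatening sum $1/a_{p+1}+1/a_{p+2}$ (the paper does this via the inequality $1/a_{p+1}+1/a_{p+3}<1/a_p$, its Lemma \ref{l2}, exactly as you anticipate in your ``main obstacle'' remark), split on the parity of $p=g_1(\theta)$ using the Catalan-type identity $a_ma_{m+3}-a_{m+1}a_{m+2}=(-1)^m\chi$, and translate the condition on $g_2(\theta)$ into the interval endpoints via $a_{2n+2}F_{\xi(n)}+a_{2n+3}F_{\xi(n)+1}=a_{2n+3+\xi(n)}$ (Lemma \ref{l1} and Corollary \ref{c3}). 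Your two bookkeeping caveats are both handled in the paper in the way you expect, and your equivalence $a_{g_2(\theta)}>a_pa_{p+1}a_{p+2}/\chi \Leftrightarrow \theta\le 1/a_{2n+2}+1/a_{2n+3+\xi(n)}$ is the correct one.

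There is, however, one genuine gap --- which, notably, the paper's own proof shares. Your reduction at $m=p+1$ enumerates the cases $l^*\ge p+3$ and $l^*=p+2$ but silently skips $l^*=p+1$, i.e.\ the equal-index candidate $2/a_{p+1}$, which the stated definition of $\mathcal{A}(\theta,2)$ (with $1\le m\le n$) permits. This candidate is not harmless: $2/a_{p+1}>1/a_p$ is equivalent to $a_{p-1}<a_p$, which holds for all $p\ge 2$, and it can strictly exceed $\mathcal{G}(\theta)$ even when $p$ is odd, where your parity dichotomy declares the greedy sum automatically best. For example, with $a_n=F_{n+1}$ and $\theta\in(2/5,\,16/39]$ one gets $g_1(\theta)=3$, $g_2(\theta)=7$, so $\mathcal{G}(\theta)=1/3+1/21=8/21<2/5=1/a_4+1/a_4<\theta$, and such $\theta$ lie in none of the intervals of the theorem. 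The paper's converse direction commits the same omission by writing the competitor with $\ell_2>\ell_1$ without justification. The statement and both arguments are correct only under the reading $m<n$ (distinct indices); you should either adopt that reading explicitly or add a separate argument disposing of the repeated-index competitors.
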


\begin{rek}\normalfont
The function $\xi: \mathbb{N}\cup \{0\}\rightarrow \mathbb{N}\cup \{0\}$ is well-defined because for $n\ge 0$
\begin{align*}
    \frac{a_{2n+2}a_{2n+3}a_{2n+4}}{a_0^2+a_1a_0-a_1^2}\ \ >\ \frac{a_{1}a_{2n+3}a_{3}}{a_0^2+a_1a_0-a_1^2}&\ =\ \frac{a_{1}(2a_{1}+a_0)a_{2n+3}}{a_0^2+a_1a_0-a_1^2}\\
    &\ =\ \frac{(2a^2_{1}+a_1a_0)a_{2n+3}}{a_0^2+a_1a_0-a_1^2}\\
    &\ >\ a_{2n+3}\ =\ a_{2n+2}F_0 + a_{2n+3}F_1.
\end{align*}

\end{rek}

This paper is structured as follows: Section 2 outlines all preliminary results, Section 3 proves Theorem 1.1, while Section 4 applies it to some sequences to find a simpler way of defining $\theta(n)$ for them.
%%%%%%%%%%%%%%%%%%%%%%%%%%%%%%%%%%%%%%%%%%%%%%%%%%%%%%%%%%%%%%%%%%%%%%%%%%%%%%%%%%%%%%%%%%%%%%%%%%%%%%%%%%%%%%%%%%%%%%%%%%%%%%%%%%%%%%%%%%%%%%%%%%%%%%%%%%%%%%%%%%%%%%%%%%%%%%%%%%%%%%%%%%%%%%%%%%%%%%%%%%%%%%%%%%%%%%%%%%%%%%%%%%%%%%%%%%%%%%%%%%%%%%%%%%%%%%%%%%%%%%%%%%%%%%%%%%%%%%%%%%%%%%%%%%%%%%%%%%%%%%%%%%%%%%%%%%%%%%%%%%%%%%%%%%%%%%%%%%%%%%%%%%%%%%%%%%%%%%%%%%%%%%%%%%%%%%%%%%%%%%%%%%%%%%%%%%%%%%%%%%%%%%%%
\section{Preliminary results}

\begin{lem}\label{l1}
For $m, n\ge 0$, we have
$$a_{n+m} \ =\ F_{n-1}a_m + F_{n}a_{m+1}.$$
\end{lem}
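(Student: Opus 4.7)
The plan is to fix $m \ge 0$ and induct on $n$, using the convention $F_{-1} = 1$, $F_0 = 0$, $F_1 = 1$ for the Fibonacci numbers (so that the formula makes sense at $n = 0$).

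For the base cases, I would verify $n = 0$ and $n = 1$ by direct substitution:
$$a_{m+0}\ =\ a_m\ =\ 1\cdot a_m + 0 \cdot a_{m+1}\ =\ F_{-1}a_m + F_0 a_{m+1},$$
$$a_{m+1}\ =\ 0\cdot a_m + 1 \cdot a_{m+1}\ =\ F_0 a_m + F_1 a_{m+1}.$$

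For the inductive step, I would assume the formula for $n-1$ and $n$ (with $n \ge 1$) and establish it for $n+1$. Using the defining recurrence $a_{(n+1)+m} = a_{n+m} + a_{(n-1)+m}$ from condition b), I would apply the two inductive hypotheses and then regroup using the Fibonacci recurrence $F_{k} + F_{k-1} = F_{k+1}$:
\begin{align*}
a_{(n+1)+m}\ &=\ \bigl(F_{n-1} a_m + F_n a_{m+1}\bigr) + \bigl(F_{n-2} a_m + F_{n-1} a_{m+1}\bigr)\\
&=\ (F_{n-1}+F_{n-2}) a_m + (F_n + F_{n-1}) a_{m+1}\\
&=\ F_n a_m + F_{n+1} a_{m+1},
\end{align*}
which is the statement at $n+1$. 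This closes the induction.

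There is no real obstacle here: the result is a two-variable linear recurrence identity, and the only mildly delicate point is fixing the Fibonacci convention so that the $n = 0$ case reads correctly. Since both the $a$-recurrence and the $F$-recurrence have the same form, the algebra in the inductive step collapses immediately.
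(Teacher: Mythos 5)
Your proof is correct, but it takes a different route from the paper's. You run a direct two-step induction on $n$ (for fixed $m$) on the sequence $(a_k)$ itself, using the convention $F_{-1}=1$, $F_0=0$ for the base cases and the shared recurrence shape of $(a_k)$ and $(F_k)$ for the inductive step; the only point worth making explicit is that the step $a_{(n+1)+m}=a_{n+m}+a_{(n-1)+m}$ is legitimate because $(n+1)+m\ge 2$ when $n\ge 1$, which condition b) requires. The paper instead factors the argument through two auxiliary facts: the classical addition law $F_{n+m}=F_{n-1}F_m+F_nF_{m+1}$ (proved by induction in the Appendix) and the representation $a_n=a_0F_{n-1}+a_1F_n$, and then combines them by a short algebraic regrouping. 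The two approaches are essentially equivalent in content --- the Appendix's induction for the addition law is the same computation you perform, just carried out for $(F_k)$ rather than $(a_k)$ --- but yours is more self-contained and avoids the intermediate representation \eqref{e2}, while the paper's has the advantage of isolating the addition law \eqref{e1}, which it states as a standalone identity valid for all $m,n\in\mathbb{Z}$ and reuses as a named tool.
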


\begin{proof}
Recall the well-known identity, whose proof is in the Appendix: 
\begin{equation}\label{e1}F_{n+m} \ =\ F_{n-1}F_m +F_n F_{m+1}, \mbox{ for all }m,n\in\mathbb{Z}.\end{equation}
Due to the recurrence relation of $(a_n)_{n=0}^\infty$, we have
\begin{equation}\label{e2}a_n\ =\ a_0 F_{n-1} + a_1 F_n, \mbox{ for all }n\ge 0,\end{equation}
which can be easily proved by induction. Choose $m, n\ge 0$. By \eqref{e1} and \eqref{e2}, 
\begin{align*}
    a_{n+m} &\ =\ a_0 F_{n+m-1} +a_1 F_{n+m}\\
    &\ =\ a_0 (F_{n-1}F_{m-1} + F_n F_m) + a_1(F_{n-1}F_m + F_nF_{m+1})\\
    &\ =\ F_{n-1}(a_0F_{m-1} + a_1F_m) + F_n(a_0F_m + a_1F_{m+1})\\
    &\ =\ a_mF_{n-1} + a_{m+1}F_n.
\end{align*}
\end{proof}

\begin{lem}
For $n\ge 0$, we have
\begin{equation}\label{e3}a_n a_{n+3} - a_{n+1}a_{n+2}\ =\ (-1)^{n}\chi.\end{equation}
\end{lem}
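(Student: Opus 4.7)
The plan is to establish this Cassini-type identity by induction on $n$, using only the recurrence $a_{n} = a_{n-1}+a_{n-2}$ and the definition of $\chi$.

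For the base case $n=0$, I would compute $a_2 = a_0 + a_1$ and $a_3 = a_1 + a_2 = a_0 + 2a_1$ from the recurrence, then directly verify
\[
a_0 a_3 - a_1 a_2 \ =\ a_0(a_0 + 2a_1) - a_1(a_0+a_1) \ =\ a_0^2 + a_0 a_1 - a_1^2 \ =\ \chi \ =\ (-1)^0 \chi.
\]

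For the inductive step, I would assume $a_n a_{n+3} - a_{n+1}a_{n+2} = (-1)^n \chi$ and show $a_{n+1}a_{n+4} - a_{n+2}a_{n+3} = (-1)^{n+1}\chi$. The key manipulation is to substitute $a_{n+2} = a_n + a_{n+1}$ inside the second product (rather than expanding $a_{n+4}$) and then use $a_{n+4} - a_{n+3} = a_{n+2}$ to telescope:
\[
a_{n+1}a_{n+4} - a_{n+2}a_{n+3} \ =\ a_{n+1}a_{n+4} - (a_n + a_{n+1})a_{n+3} \ =\ a_{n+1}(a_{n+4}-a_{n+3}) - a_n a_{n+3} \ =\ a_{n+1}a_{n+2} - a_n a_{n+3}.
\]
By the inductive hypothesis, this equals $-(-1)^n \chi = (-1)^{n+1}\chi$, completing the induction.

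I do not anticipate a real obstacle here; the identity is essentially a Cassini-type relation and the only subtle point is choosing the correct substitution in the inductive step so that the expression collapses back to the previous case (splitting $a_{n+2}$ rather than $a_{n+4}$). As a backup route, one could instead plug the closed form $a_n = a_0 F_{n-1} + a_1 F_n$ from \eqref{e2} into $a_n a_{n+3} - a_{n+1}a_{n+2}$, expand, and apply the classical Fibonacci identities $F_{n-1}F_{n+2} - F_n F_{n+1} = (-1)^n$, $F_{n-1}F_{n+3} - F_{n+1}^2 = (-1)^n$, and $F_n F_{n+3} - F_{n+1}F_{n+2} = (-1)^{n+1}$; the three sign contributions combine to give exactly $(-1)^n \chi$, but this route is longer than the induction.
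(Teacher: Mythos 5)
Your proof is correct and follows essentially the same route as the paper: the same base-case computation and the same inductive step, in which $a_{n+2}$ is split as $a_n + a_{n+1}$ and the recurrence collapses $a_{n+1}a_{n+4} - a_{n+2}a_{n+3}$ to $-(a_n a_{n+3} - a_{n+1}a_{n+2})$. No substantive difference.
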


\begin{proof}
We proceed by induction on $n$. Base case: for $n = 0$, 
$$a_0 a_3 - a_1a_2\ =\ a_0(2a_1+a_0) - a_1(a_0+a_1)\ =\ a^2_0+a_0a_1-a_1^2\ =\ \chi.$$
Inductive hypothesis: suppose that \eqref{e3} holds for $n = k$ for some $k\ge 0$. We have
\begin{align*}
    a_{k+1}a_{k+4} - a_{k+2}a_{k+3}&\ =\ a_{k+1}(a_{k+3}+a_{k+2}) - (a_{k+1}+a_k)a_{k+3}\\
    &\ =\ -(a_{k}a_{k+3}-a_{k+1}a_{k+2})\\
    &\ =\ -(-1)^{k}\chi\ =\ (-1)^{k+1}\chi.
\end{align*}
This completes our proof. 
\end{proof}

\begin{cor}\label{c3}
For $n\ge 0$, $\xi(n)$ is the largest nonnegative integer such that
\begin{equation}\label{e10}\frac{1}{a_{2n+3+\xi(n)}}\ \ge\ \frac{\chi}{a_{2n+2}a_{2n+3}a_{2n+4}}.\end{equation}
Equivalently,
\begin{equation}\label{e11}\frac{1}{a_{2n+3}} +\frac{1}{a_{2n+4}}\ \le\ \frac{1}{a_{2n+2}} + \frac{1}{a_{2n+3+\xi(n)}}.\end{equation}
\end{cor}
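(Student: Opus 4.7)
My plan is to convert the inequality \eqref{e6} defining $\xi(n)$ into \eqref{e10}, and then derive \eqref{e11} from \eqref{e10} by a short algebraic identity.

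First, I would rewrite the left-hand side of \eqref{e6} as a single sequence term. Applying Lemma \ref{l1} with indices chosen so that the formula reads $a_{m+n} = F_{n-1}a_m + F_n a_{m+1}$, taking $m = 2n+2$ and replacing the Lemma's $n$ by $\xi(n)+1$, I get
$$a_{2n+3+\xi(n)} \ =\ a_{(2n+2)+(\xi(n)+1)} \ =\ F_{\xi(n)}a_{2n+2} + F_{\xi(n)+1}a_{2n+3}.$$
Thus the condition \eqref{e6} that $\xi(n)$ is the largest nonnegative integer making this $\leq a_{2n+2}a_{2n+3}a_{2n+4}/\chi$ is equivalent to saying $\xi(n)$ is the largest nonnegative integer with $a_{2n+3+\xi(n)} \le a_{2n+2}a_{2n+3}a_{2n+4}/\chi$. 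Taking reciprocals (everything is positive) yields \eqref{e10} immediately.

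For \eqref{e11}, the plan is to establish the identity
$$\frac{1}{a_{2n+3}} + \frac{1}{a_{2n+4}} - \frac{1}{a_{2n+2}} \ =\ \frac{\chi}{a_{2n+2}a_{2n+3}a_{2n+4}},$$
after which \eqref{e11} is just \eqref{e10} rearranged. To prove the identity, I would clear denominators: the numerator is
$$a_{2n+2}a_{2n+4} + a_{2n+2}a_{2n+3} - a_{2n+3}a_{2n+4},$$
and using the recurrence $a_{2n+3} = a_{2n+2} + a_{2n+1}$ (so $a_{2n+2} - a_{2n+3} = -a_{2n+1}$) this simplifies to $a_{2n+2}a_{2n+3} - a_{2n+1}a_{2n+4}$. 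Now Lemma 2.2 (equation \eqref{e3}) applied at index $2n+1$ gives $a_{2n+1}a_{2n+4} - a_{2n+2}a_{2n+3} = (-1)^{2n+1}\chi = -\chi$, so the numerator is exactly $\chi$, as desired.

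The main (and really only) subtlety is bookkeeping with the index shifts in Lemma \ref{l1}: one must be sure to line up indices so that the Fibonacci coefficients on $a_{2n+2}$ and $a_{2n+3}$ come out as $F_{\xi(n)}$ and $F_{\xi(n)+1}$ respectively, matching \eqref{e6}. Once that is matched, both equivalences are formal, and the corollary follows.
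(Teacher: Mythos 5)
Your proposal is correct and follows essentially the same route as the paper: \eqref{e10} is obtained by collapsing the left side of \eqref{e6} to $a_{2n+3+\xi(n)}$ via Lemma \ref{l1} and taking reciprocals, and \eqref{e11} comes from the identity $\frac{1}{a_{2n+3}}+\frac{1}{a_{2n+4}}-\frac{1}{a_{2n+2}}=\frac{\chi}{a_{2n+2}a_{2n+3}a_{2n+4}}$, which the paper also derives from \eqref{e3} (it just combines $\frac{1}{a_{2n+3}}-\frac{1}{a_{2n+2}}$ first rather than clearing all denominators at once). No gaps.
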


\begin{proof}
To prove \eqref{e10}, we use \eqref{m1} and Lemma \ref{l1}. To prove \eqref{e11}, use \eqref{e3} to obtain
\begin{align*}
\frac{1}{a_{2n+3}} - \frac{1}{a_{2n+2}}+ \frac{1}{a_{2n+4}}\ =\ \frac{1}{a_{2n+4}}-\frac{a_{2n+1}}{a_{2n+2}a_{2n+3}}&\ =\ \frac{a_{2n+2}a_{2n+3}-a_{2n+1}a_{2n+4}}{a_{2n+2}a_{2n+3}a_{2n+4}}\\
&\ =\ \frac{\chi}{a_{2n+2}a_{2n+3}a_{2n+4}}\\
&\ \le\ \frac{1}{a_{2n+3+\xi(n)}}.
\end{align*}
\end{proof}

\begin{cor}\label{c1}
For $n\ge 0$, 
$$\frac{1}{a_{2n+1}} - \frac{1}{a_{2n+2}} - \frac{1}{a_{2n+3}} \ >\ 0.$$
\end{cor}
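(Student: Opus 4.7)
The plan is to clear denominators and reduce the claim to the identity already established in equation \eqref{e3}. The inequality $\frac{1}{a_{2n+1}} - \frac{1}{a_{2n+2}} - \frac{1}{a_{2n+3}} > 0$ is equivalent to $\frac{1}{a_{2n+1}} > \frac{1}{a_{2n+2}} + \frac{1}{a_{2n+3}}$, so I would start by combining the right-hand side over the common denominator $a_{2n+2}a_{2n+3}$.

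Using the recurrence $a_{2n+2} + a_{2n+3} = a_{2n+4}$, the right-hand side becomes $\frac{a_{2n+4}}{a_{2n+2}a_{2n+3}}$, and cross-multiplying reduces the whole statement to showing $a_{2n+2}a_{2n+3} > a_{2n+1}a_{2n+4}$, i.e., $a_{2n+2}a_{2n+3} - a_{2n+1}a_{2n+4} > 0$.

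Next, I would apply identity \eqref{e3} with index $2n+1$ in place of $n$. Since $2n+1$ is odd, that identity gives $a_{2n+1}a_{2n+4} - a_{2n+2}a_{2n+3} = (-1)^{2n+1}\chi = -\chi$, so $a_{2n+2}a_{2n+3} - a_{2n+1}a_{2n+4} = \chi$. By Condition c), $\chi = a_0^2 + a_1 a_0 - a_1^2 > 0$, which finishes the argument.

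There is no real obstacle here; the only thing to be careful about is the parity check when invoking \eqref{e3}, since that is what guarantees the sign is positive (the analogous statement for even indices, $\frac{1}{a_{2n}} - \frac{1}{a_{2n+1}} - \frac{1}{a_{2n+2}}$, would come out negative). Once the reduction is made, the conclusion is a single application of \eqref{e3} together with hypothesis c).
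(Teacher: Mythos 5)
Your proposal is correct and follows essentially the same route as the paper: both clear denominators and reduce the claim to the identity \eqref{e3} together with the hypothesis $\chi>0$. The only cosmetic difference is that you combine $\frac{1}{a_{2n+2}}+\frac{1}{a_{2n+3}}$ first and invoke \eqref{e3} at the odd index $2n+1$, whereas the paper combines $\frac{1}{a_{2n+1}}-\frac{1}{a_{2n+2}}$ first and invokes \eqref{e3} at the even index $2n$; both yield the numerator $\chi$.
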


\begin{proof}
By \eqref{e3}, we have
\begin{align*}
    \frac{1}{a_{2n+1}} - \frac{1}{a_{2n+2}} - \frac{1}{a_{2n+3}}&\ =\ \frac{a_{2n+2}-a_{2n+1}}{a_{2n+1}a_{2n+2}} - \frac{1}{a_{2n+3}}\\
    &\ =\ \frac{a_{2n+2}a_{2n+3}-a_{2n+1}a_{2n+3} - a_{2n+1}a_{2n+2}}{a_{2n+1}a_{2n+2}a_{2n+3}}\\
    &\ =\ \frac{a_{2n}a_{2n+3} - a_{2n+1}a_{2n+2}}{a_{2n+1}a_{2n+2}a_{2n+3}}\\
    &\ =\ \frac{\chi}{a_{2n+1}a_{2n+2}a_{2n+3}} \ >\ 0.
\end{align*}
\end{proof}

\begin{cor}\label{c2}
For $n\ge 0$,
$$\frac{1}{a_{2n+3}} + \frac{1}{a_{2n+4}} - \frac{1}{a_{2n+2}}\ >\ 0.$$
\end{cor}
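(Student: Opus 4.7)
The plan is to mirror the proof of Corollary \ref{c1} by writing the expression as a single fraction, simplifying the numerator using the recurrence relation, and then applying identity \eqref{e3} to show that the numerator equals $\chi > 0$. Since the denominator $a_{2n+2}a_{2n+3}a_{2n+4}$ is manifestly positive, this will establish the inequality.

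More concretely, I would first observe that
$$\frac{1}{a_{2n+3}} + \frac{1}{a_{2n+4}} - \frac{1}{a_{2n+2}} \ =\ \frac{a_{2n+3}+a_{2n+4}}{a_{2n+3}a_{2n+4}} - \frac{1}{a_{2n+2}}.$$
Using the defining recurrence $a_{2n+3}+a_{2n+4}=a_{2n+5}$, this becomes
$$\frac{a_{2n+5}}{a_{2n+3}a_{2n+4}} - \frac{1}{a_{2n+2}} \ =\ \frac{a_{2n+2}a_{2n+5}-a_{2n+3}a_{2n+4}}{a_{2n+2}a_{2n+3}a_{2n+4}}.$$

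The key step is to recognize the numerator as an instance of \eqref{e3}. Applying Lemma 2.2 with index $n$ replaced by $2n+2$, which is even, yields $a_{2n+2}a_{2n+5}-a_{2n+3}a_{2n+4}=(-1)^{2n+2}\chi=\chi$. Since $\chi>0$ by Condition c) and the denominator is positive, the entire expression equals $\chi/(a_{2n+2}a_{2n+3}a_{2n+4})>0$, completing the proof.

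There is essentially no obstacle here; the main subtlety is merely ensuring the parity of the index matches so that $(-1)^{2n+2}=+1$ rather than $-1$. This is precisely what makes the corollary's sign positive, in contrast to Corollary \ref{c1}, whose analogous calculation uses an odd index and thus would flip the sign if the terms were arranged the same way.
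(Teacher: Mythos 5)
Your proof is correct and follows essentially the same route as the paper: both reduce the expression to a single fraction over $a_{2n+2}a_{2n+3}a_{2n+4}$, identify the numerator as $\chi$ via the identity \eqref{e3}, and conclude from $\chi>0$. The only cosmetic difference is that you combine $1/a_{2n+3}+1/a_{2n+4}$ first and invoke \eqref{e3} at index $2n+2$, whereas the paper simply reuses the computation already carried out in the proof of Corollary \ref{c3} (which applies \eqref{e3} at index $2n+1$ after a different grouping); the two numerators $a_{2n+2}a_{2n+5}-a_{2n+3}a_{2n+4}$ and $a_{2n+2}a_{2n+3}-a_{2n+1}a_{2n+4}$ are identical by the recurrence.
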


\begin{proof}
From the proof of Corollary \ref{c3}, 
$$\frac{1}{a_{2n+3}} - \frac{1}{a_{2n+2}}+ \frac{1}{a_{2n+4}}\ =\ \frac{\chi}{a_{2n+2}a_{2n+3}a_{2n+4}}\ > \ 0.$$
\end{proof}

\begin{lem}\label{l2}
For $n\ge 0$, 
$$\frac{1}{a_{2n+2}} - \frac{1}{a_{2n+3}} - \frac{1}{a_{2n+5}} \ >\ 0.$$
\end{lem}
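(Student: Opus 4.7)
The plan is to follow the template used in Corollaries \ref{c3}, \ref{c1}, and \ref{c2}: combine the three reciprocals over a common denominator and show the resulting numerator is positive by reducing it to an expression involving $\chi$. First I would merge the first two terms via $\frac{1}{a_{2n+2}} - \frac{1}{a_{2n+3}} = \frac{a_{2n+1}}{a_{2n+2}a_{2n+3}}$, using $a_{2n+3}-a_{2n+2}=a_{2n+1}$, and then bring in $-1/a_{2n+5}$ over a common denominator to obtain
$$\frac{1}{a_{2n+2}} - \frac{1}{a_{2n+3}} - \frac{1}{a_{2n+5}}\ =\ \frac{a_{2n+1}a_{2n+5} - a_{2n+2}a_{2n+3}}{a_{2n+2}a_{2n+3}a_{2n+5}}.$$
Since the denominator is positive, it suffices to prove the numerator is positive.

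Next I would reduce the numerator to a form in which \eqref{e3} is applicable. Using $a_{2n+5} = a_{2n+3} + a_{2n+4}$ together with $a_{2n+2} - a_{2n+1} = a_{2n}$ rewrites the numerator as $a_{2n+1}a_{2n+4} - a_{2n}a_{2n+3}$. Applying \eqref{e3} at index $2n$ substitutes $a_{2n}a_{2n+3} = a_{2n+1}a_{2n+2} + \chi$, and then $a_{2n+4} - a_{2n+2} = a_{2n+3}$ collapses everything to $a_{2n+1}a_{2n+3} - \chi$.

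Finally I would show $a_{2n+1}a_{2n+3} > \chi$. Since $(a_n)$ is increasing, $a_{2n+1}a_{2n+3} \ge a_1 a_3 = a_1(2a_1 + a_0)$. A short calculation yields $a_1 a_3 - \chi = 3a_1^2 - a_0^2$, which is positive because $a_0 \le a_1$ forces $a_0^2 \le a_1^2 < 3a_1^2$.

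The main obstacle is the second step: the index pattern $(2n+1,\,2n+5,\,2n+2,\,2n+3)$ in the numerator does not directly fit the $(n, n+1, n+2, n+3)$ shape of \eqref{e3}, so one must first telescope via the recurrence to produce an index-aligned expression. Once the reduction to $a_{2n+1}a_{2n+3} - \chi$ is in hand, the concluding bound is a one-line elementary computation using only the hypothesis $a_0 \le a_1$.
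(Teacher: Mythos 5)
Your proposal is correct. The algebraic reduction is identical to the paper's: both combine the first two terms into $a_{2n+1}/(a_{2n+2}a_{2n+3})$, pass to the numerator $a_{2n+1}a_{2n+5}-a_{2n+2}a_{2n+3}$, and telescope it to $a_{2n+1}a_{2n+4}-a_{2n}a_{2n+3}$. You diverge only at the final positivity step: the paper expands once more, writing the numerator as $a_{2n+1}a_{2n+2}+a_{2n+3}(a_{2n+1}-a_{2n})$, which is manifestly positive since $a_{2n+1}\ge a_{2n}$; you instead invoke the identity \eqref{e3} to rewrite the numerator as $a_{2n+1}a_{2n+3}-\chi$ and then check $a_{2n+1}a_{2n+3}\ge a_1a_3 = \chi + 3a_1^2-a_0^2 > \chi$. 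Both finishes are valid; the paper's is a one-line direct expansion needing nothing beyond the recurrence and $a_0\le a_1$, while yours leans on Lemma 2.2 and a small computation with $\chi$ — slightly longer, but it has the incidental merit of quantifying the gap (the numerator equals $a_{2n+1}a_{2n+3}-\chi$ exactly), in the same spirit as the proofs of Corollaries \ref{c3}--\ref{c2}.
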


\begin{proof}
Write
\begin{align*}
    \frac{1}{a_{2n+2}} - \frac{1}{a_{2n+3}} - \frac{1}{a_{2n+5}}&\ =\ \frac{a_{2n+1}}{a_{2n+2}a_{2n+3}} - \frac{1}{a_{2n+5}}\\
    &\ =\ \frac{a_{2n+1}a_{2n+5}- a_{2n+2}a_{2n+3}}{a_{2n+2}a_{2n+3}a_{2n+5}}\\
    &\ =\ \frac{a_{2n+1}(a_{2n+3}+a_{2n+4})- (a_{2n+1}+a_{2n})a_{2n+3}}{a_{2n+2}a_{2n+3}a_{2n+5}}\\
    &\ =\ \frac{a_{2n+1}a_{2n+4} - a_{2n}a_{2n+3}}{a_{2n+2}a_{2n+3}a_{2n+5}}\\
    &\ =\ \frac{a_{2n+1}a_{2n+2} + a_{2n+3}(a_{2n+1}-a_{2n})}{a_{2n+2}a_{2n+3}a_{2n+5}} > 0.
\end{align*}

$$ \frac{1}{a_{2n+2}} - \frac{1}{a_{2n+3}} - \frac{1}{a_{2n+5}} \ >\ 0,$$
as desired. 
\end{proof}

\section{Proof of Theorem \ref{m1}}

\begin{proof}[Proof of Theorem \ref{m1}]
Suppose that for some $m\ge 0$, we have
\begin{equation}\label{e4}\frac{1}{a_{2m+3}}+\frac{1}{a_{2m+4}}\ <\ \theta \ \le\ \frac{1}{a_{2m+2}} + \frac{1}{a_{2m+3 + \xi(m)}}.\end{equation}
We shall show that $\mathcal{G}(\theta)$ is not the best two-term underapproximation out of $A(\theta,2)$.

By Corollaries \ref{c1} and \ref{c2}, 
$$\frac{1}{a_{2m+2}}\ <\ \frac{1}{a_{2m+3}}+\frac{1}{a_{2m+4}}\ <\ \theta \ \le\ \frac{1}{a_{2m+2}} + \frac{1}{a_{2m+3 + \xi(m)}}\ <\ \frac{1}{a_{2m+1}}.$$
Hence, $g_1(\theta) = 2m+2$. It then follows from \eqref{e4} and Corollary \ref{c3} that 
$$\mathcal{G}(\theta) \ \le\ \frac{1}{a_{2m+2}} + \frac{1}{a_{2m+4+\xi(m)}}\ <\ \frac{1}{a_{2m+3}} +\frac{1}{a_{2m+4}} \ <\ \theta,$$
so $\mathcal{G}(\theta)$ is not the best two-term underapproximation.

Conversely, suppose that $\mathcal{G}(\theta)$ is not the best two-term underapproximation of $\theta$. Let $g_1(\theta) = k$. If $k$ is odd,  then Corollary \ref{c1} gives
$$\frac{1}{a_k} \ >\ \frac{1}{a_{k+1}} +\frac{1}{a_{k+2}},$$
making $\mathcal{G}(\theta)$ the best. Hence, $k$ must be even, i.e., $k = 2m+2$ for some $m\ge 0$. Since $\mathcal{G}(\theta)$ is not the best, there are $\ell_2 > \ell_1 > 2m+2$ such that 
\begin{equation}\label{e7}\frac{1}{a_{2m+2}} \ <\ \mathcal{G}(\theta) \ <\ \frac{1}{a_{\ell_1}} +\frac{1}{a_{\ell_2}}\ <\ \theta.\end{equation}
By Lemma \ref{l2}, 
$$\frac{1}{a_{2m+3}} +\frac{1}{a_{2m+5}}\ <\ \frac{1}{a_{2m+2}} \ <\ \frac{1}{a_{\ell_1}} +\frac{1}{a_{\ell_2}}\ <\ \theta.$$
It follows that $(\ell_1, \ell_2) = (2m+3, 2m+4)$, and 
$$\frac{1}{a_{2m+3}} +\frac{1}{a_{2m+4}}\ <\ \theta.$$
By Corollary \ref{c3}, 
\begin{equation}\label{e8}\frac{1}{a_{2m+3}} + \frac{1}{a_{2m+4}}\ \le\ \frac{1}{a_{2m+2}} + \frac{1}{a_{2m+3+\xi(m)}}.\end{equation}
If 
$\theta  > a^{-1}_{2m+2} + a^{-1}_{2m+3+\xi(m)}$, then
$$\frac{1}{a_{\ell_1}} +\frac{1}{a_{\ell_2}}\ =\ \frac{1}{a_{2m+3}} + \frac{1}{a_{2m+4}} \ \le\ \frac{1}{a_{2m+2}} + \frac{1}{a_{2m+3+\xi(m)}} \ \le\ \mathcal{G}(\theta).$$
This contradicts \eqref{e7}. Therefore, 
$$\theta\in \left(\frac{1}{a_{2m+3}} + \frac{1}{a_{2m+4}}, \frac{1}{a_{2m+2}} +\frac{1}{a_{2m+3+\xi(m)}}\right].$$

\end{proof}
\section{Application to some sequences}
The goal of this section is to apply (\ref{m1}) to specific sequences. The first one is going to the Fibonacci sequence $(F_n)_{n=1}^\infty$. Let
$a_{n}=F_{n+1}$ then it is easy to see that $(a_n)_{n=0}^\infty$ satisfies all conditions. 
\begin{prop}

The greedy two-term underapproximation $\mathcal{G}(\theta)$ is the best underapproximation out of $\mathcal{A}(\theta, 2)$ if and only if 
$$\theta \ \notin\ \bigcup_{n = 0}^{\infty}\left(\frac{1}{F_{2n+4}}+\frac{1}{F_{2n+5}}, \frac{1}{F_{2n+3}} + \frac{1}{F_{6n+8}}\right],$$
\begin{proof} We want to show that if $\xi(n)$ is the largest nonnegative integer such that
$$F_{2n+3}F_{\xi(n)} +F_{2n+4}F_{\xi(n)+1}\ \le\ {F_{2n+3}F_{2n+4}F_{2n+5}}.$$
then $\xi(n) = 4n+4$
Using the Binet's formula, we have
    \begin{align*}
       &F_{2n+3}F_{2n+4}F_{2n+5}\\
       &\ =\ \frac{(\varphi^{2n+3} - \varphi^{-2n-3})(\varphi^{2n+4} - \varphi^{-2n-4})(\varphi^{2n+5} - \varphi^{-2n-5})}{5\sqrt{5}}\\
       &\ =\ \frac{\varphi^{6n+12} - \varphi^{-6n-12} - ( \varphi^{2n+6} - \varphi^{-2n-6} + \varphi^{2n+4} - \varphi^{-2n-4} + \varphi^{2n+2} - \varphi^{-2n-2})}{5\sqrt{5}} \\
       &\ =\ \frac{F_{6n+12} - F_{2n+6} - F_{2n+4} - F_{2n+2}}{5} \\
       &\ =\ \frac{F_{6n+12} - F_{2n+6} - F_{2n+5} + F_{2n+1}}{5} \\
       &\ =\ \frac{3  F_{6n+9}  + 2 F_{6n+8} - F_{2n+7} + F_{2n+1}}{5}
    \end{align*}
    By (\ref{l1}) $$F_{2n+3}F_{\xi(n)} +F_{2n+4}F_{\xi(n)+1} = F_{2n+4+\xi(n)}.$$ So now it's enough to prove that
    $$F_{6n + 8} \leq \frac{3  F_{6n+9}  + 2 F_{6n+8} - F_{2n+7} + F_{2n+1}}{5}  < F_{6n + 9}.$$ Which is true because
\begin{align*}
F_{6n+8} < \frac{2F_{6n+9}  + 3F_{6n+8}}{5}&\ <\ \frac{3  F_{6n+9}  + 2 F_{6n+8} - F_{2n+7} + F_{2n+1}}{5}\\
&\ <\ \frac{4  F_{6n+9}  + F_{6n+8}}{5}\\
&\ <\ F_{6n+9}
\end{align*}    
    
\end{proof}
\end{prop}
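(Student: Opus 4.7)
The plan is to derive the proposition directly from Theorem \ref{m1} by setting $a_n = F_{n+1}$ and determining an explicit formula for $\xi(n)$. First, I would verify that the Fibonacci-shifted sequence satisfies conditions a)--c): $a_0 = F_1 = 1$, $a_1 = F_2 = 1$, the recursion is automatic, and $\chi = a_0^2 + a_0a_1 - a_1^2 = 1 + 1 - 1 = 1 > 0$. Once $\chi = 1$ is fixed, the statement of Theorem \ref{m1} translates as follows: the lower endpoint $1/a_{2n+3} + 1/a_{2n+4}$ becomes $1/F_{2n+4} + 1/F_{2n+5}$ and the upper endpoint $1/a_{2n+2} + 1/a_{2n+3+\xi(n)}$ becomes $1/F_{2n+3} + 1/F_{2n+4+\xi(n)}$, so the proposition will follow from the claim that $\xi(n) = 4n+4$.

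The next step is to recast the inequality defining $\xi(n)$ into a manageable form. Because $\chi = 1$, I need the largest $\xi(n) \ge 0$ with
$$F_{2n+3}F_{\xi(n)} + F_{2n+4}F_{\xi(n)+1} \ \le\ F_{2n+3}F_{2n+4}F_{2n+5}.$$
Applying Lemma \ref{l1} with the Fibonacci sequence itself collapses the left-hand side to a single Fibonacci number $F_{2n+4+\xi(n)}$. Thus the entire problem reduces to showing
$$F_{6n+8} \ \le\ F_{2n+3}F_{2n+4}F_{2n+5} \ <\ F_{6n+9},$$
which will force $\xi(n) = 4n+4$ and complete the proof.

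The only genuine computation is a closed-form expansion of the triple product, for which I would use Binet's formula $F_k = (\varphi^k - \varphi^{-k})/\sqrt 5$ with $\varphi \cdot (-1/\varphi) = -1$. Multiplying the three factors yields eight terms which reorganize, via $\varphi^k - \varphi^{-k} = \sqrt 5\,F_k$ applied to each surviving pair, into
$$F_{2n+3}F_{2n+4}F_{2n+5} \ =\ \frac{F_{6n+12} - F_{2n+6} - F_{2n+4} - F_{2n+2}}{5}.$$
Repeated use of $F_{k} = F_{k-1} + F_{k-2}$ then rewrites this as $(3F_{6n+9} + 2F_{6n+8} - F_{2n+7} + F_{2n+1})/5$, where $F_{6n+12}$ unfolds as $3F_{6n+9} + 2F_{6n+8}$ and the three negative small-index terms combine into $-F_{2n+7} + F_{2n+1}$.

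At this point the two bounds $F_{6n+8} \le (\,\cdot\,)/5$ and $(\,\cdot\,)/5 < F_{6n+9}$ reduce to easy linear comparisons between Fibonacci numbers of vastly different sizes (the correction $|F_{2n+7} - F_{2n+1}|$ is dominated by $F_{6n+7}$), which I would sandwich by the chain
$$F_{6n+8} \ <\ \frac{2F_{6n+9} + 3F_{6n+8}}{5} \ <\ \frac{3F_{6n+9} + 2F_{6n+8} - F_{2n+7} + F_{2n+1}}{5} \ <\ \frac{4F_{6n+9} + F_{6n+8}}{5} \ <\ F_{6n+9}.$$
The principal obstacle is purely bookkeeping: getting the Binet expansion and its regrouping into Fibonacci numbers exactly right, since the final inequalities themselves are essentially trivial once the closed form is in hand.
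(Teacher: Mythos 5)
Your proposal follows the paper's proof essentially step for step: the same reduction via Theorem \ref{m1} with $a_n=F_{n+1}$ and $\chi=1$, the same collapse of $F_{2n+3}F_{\xi}+F_{2n+4}F_{\xi+1}$ to $F_{2n+4+\xi}$ via Lemma \ref{l1}, the same Binet expansion of the triple product, and the same sandwich between $F_{6n+8}$ and $F_{6n+9}$. So there is no methodological divergence to report.

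There is, however, a concrete computational flaw in the Binet step that you share with the paper's own write-up. Writing $F_k=(\varphi^k-\psi^k)/\sqrt{5}$ with $\psi=-1/\varphi$, one has $\psi^k=(-1)^k\varphi^{-k}$, so $F_k=(\varphi^k-\varphi^{-k})/\sqrt 5$ only for \emph{even} $k$; for odd $k$ the correct expression is $(\varphi^{k}+\varphi^{-k})/\sqrt 5$, and $(\varphi^{k}-\varphi^{-k})/\sqrt5$ is $L_k/\sqrt5$ instead. Since $2n+3$ and $2n+5$ are odd, the displayed starting point of the expansion is not $5\sqrt5\,F_{2n+3}F_{2n+4}F_{2n+5}$, and the resulting identity is false: for $n=0$ it claims $F_3F_4F_5=30$ equals $(F_{12}-F_6-F_4-F_2)/5=132/5$, which is not even an integer. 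Carrying the signs correctly gives
$$F_{2n+3}F_{2n+4}F_{2n+5}\ =\ \frac{F_{6n+12}+F_{2n+6}-F_{2n+4}+F_{2n+2}}{5}\ =\ \frac{3F_{6n+9}+2F_{6n+8}+F_{2n+5}+F_{2n+2}}{5},$$
which checks out at $n=0$ ($150/5=30$). Fortunately the conclusion survives: the lower bound $F_{6n+8}<\bigl(3F_{6n+9}+2F_{6n+8}\bigr)/5$ is immediate, and the upper bound reduces to $F_{2n+5}+F_{2n+2}<2F_{6n+7}$, which holds since $2n+5\le 6n+7$. So $\xi(n)=4n+4$ and the proposition is correct, but you should redo the expansion with the correct parity-dependent signs rather than reproduce the chain as written.
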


Next, consider the Lucas sequence $(L_n)_{n=0}^\infty$ is defined as $L_0 = 2, L_1 = 1, L_n = L_{n-1} + L_{n-2}$ where $n \ge 2$. Set $a_n = L_{n+2}$. Then it is easy to see that $(a_n)_{n=0}^\infty$ satisfies all conditions. So we will be working with $(L_n)_{n=2}^\infty$.
\begin{prop}

    The greedy two-term underapproximation $\mathcal{G}(\theta)$ is the best underapproximation out of $\mathcal{A}(\theta, 2)$ if and only if 
$$\theta \ \notin\ \bigcup_{n = 0}^{\infty}\left(\frac{1}{L_{2n+5}}+\frac{1}{L_{2n+6}}, \frac{1}{L_{2n+4}} + \frac{1}{L_{6n+11}}\right].$$
\begin{proof}
    We want to show that if $\xi(n)$ is the largest nonnegative integer such that
$$L_{2n+4}F_{\xi(n)} +L_{2n+5}F_{\xi(n)+1}\ \le\ \frac{L_{2n+4}L_{2n+5}L_{2n+6}}{5},$$ then
    $$\xi(n) = 4n+6.$$

The proof is similar to the proof of Proposition 4.1:
\begin{align*}
    \frac{L_{2n+4}L_{2n+5}L_{2n+6}}{5} &\ =\ \frac{(\varphi^{2n+4} + \varphi^{-2n-4})(\varphi^{2n+5} + \varphi^{-2n-5})(\varphi^{2n+6} + \varphi^{-2n-6})}{5}\\
       &\ =\ \frac{L_{6n+15} + L_{2n+7} + L_{2n+5} + L_{2n+3}}{5}\\
       &\ =\ \frac{L_{6n+15} + L_{2n+8} - L_{2n+2}}{5}\\
       &\ =\ \frac{3L_{6n+12} + 2L_{6n+11} + L_{2n+8} - L_{2n+2}}{5}.
\end{align*}
By (\ref{l1}),$$L_{2n+4}F_{\xi(n)} +L_{2n+5}F_{\xi(n)+1} \ = \ L_{2n+4+\xi(n)} .$$
So we need to check that:
$$L_{6n+11} \leq  \frac{3L_{6n+12} + 2L_{6n+11} + L_{2n+8} - L_{2n+2}}{5} < L_{6n+12} ,$$
which is true because 
\begin{align*}
    L_{6n+11} < \frac{2L_{6n+12} + 3L_{6n+11}}{5} &\ <\ \frac{3L_{6n+12} + 2L_{6n+11} + L_{2n+8} - L_{2n+2}}{5}\\
    &\ <\ \frac{4L_{6n+12} + L_{6n+11}}{5}\\
    &\ <\ L_{6n+12} .
\end{align*}
\end{proof}
\end{prop}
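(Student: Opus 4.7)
The strategy is to derive Proposition~4.2 as a special case of Theorem~\ref{m1} by pinning down the function $\xi(n)$ attached to $a_n = L_{n+2}$. First I would verify conditions (a)--(c): parts (a) and (b) are immediate from the Lucas recurrence, and (c) yields $\chi = L_2^2 + L_2L_3 - L_3^2 = 9+12-16 = 5 > 0$, which simultaneously fixes the denominator $5$ appearing in the defining inequality~\eqref{e6}. Since $a_{2n+3+\xi(n)} = L_{2n+5+\xi(n)}$, matching the right endpoint $\frac{1}{L_{2n+4}} + \frac{1}{L_{2n+5+\xi(n)}}$ produced by Theorem~\ref{m1} with the proposition's $\frac{1}{L_{2n+4}} + \frac{1}{L_{6n+11}}$ reduces the whole claim to showing $\xi(n) = 4n+6$.

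Second, I would invoke Lemma~\ref{l1} applied to $(L_{n+2})_{n\ge 0}$ to collapse the Fibonacci--Lucas combination in \eqref{e6} into a single Lucas number,
\[
L_{2n+4}\,F_k + L_{2n+5}\,F_{k+1} \;=\; L_{2n+5+k}.
\]
Thus $\xi(n) = 4n+6$ is equivalent to the sandwich
\[
L_{6n+11} \;\le\; \frac{L_{2n+4}\,L_{2n+5}\,L_{2n+6}}{5} \;<\; L_{6n+12}.
\]

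Third, to establish this sandwich I would follow the Binet template used in Proposition~4.1: writing $L_k = \varphi^k + \psi^k$ with $\varphi\psi = -1$, expanding the triple product, and collapsing the cross terms using $\psi^k = (-1)^k\varphi^{-k}$, I expect the compact identity $L_{2n+4}\,L_{2n+5}\,L_{2n+6} = L_{6n+15} + 2\,L_{2n+5}$. Applying the recurrence once more in the form $L_{6n+15} = 3L_{6n+12} + 2L_{6n+11}$ rewrites the middle term of the sandwich as $\frac{3L_{6n+12} + 2L_{6n+11} + 2L_{2n+5}}{5}$, after which the lower bound is immediate from the positivity of $L_{2n+5}$ and the upper bound reduces to the elementary inequality $L_{2n+5} < L_{6n+10}$, which holds for every $n \ge 0$ since $2n+5 < 6n+10$.

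The step I expect to be the main obstacle is the Binet expansion itself. In the Fibonacci case of Proposition~4.1 the signs in $F_k = (\varphi^k - \psi^k)/\sqrt{5}$ behave uniformly, but here we instead have the \emph{sum} $\varphi^k + \psi^k$, and the middle factor $L_{2n+5}$ carries odd index, so $\psi^{2n+5} = -\varphi^{-(2n+5)}$ injects minus signs into exactly those cross terms that pick up the middle $\psi$-factor. Careful bookkeeping of these sign flips across the nine-term expansion of $L_{2n+4}\,L_{2n+5}\,L_{2n+6}$ is needed to verify the compact form above, since a single sign slip would misalign the coefficient in front of $L_{2n+5}$ and potentially break the sandwich.
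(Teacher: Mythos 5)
Your proposal is correct and follows the same route as the paper: specialize Theorem~\ref{m1} to $a_n = L_{n+2}$ with $\chi = 5$, collapse $L_{2n+4}F_k + L_{2n+5}F_{k+1}$ into a single Lucas number via Lemma~\ref{l1}, evaluate the triple product by Binet, and verify the sandwich that pins down $\xi(n) = 4n+6$. The one substantive difference is that your sign bookkeeping is the correct one: since $\psi^{2n+5} = -\varphi^{-(2n+5)}$, the expansion gives $L_{2n+4}L_{2n+5}L_{2n+6} = L_{6n+15} - L_{2n+5} + L_{2n+7} + L_{2n+3} = L_{6n+15} + 2L_{2n+5}$ (check $n=0$: $7\cdot 11\cdot 18 = 1386 = 1364 + 22$), whereas the paper's displayed line $L_{6n+15}+L_{2n+7}+L_{2n+5}+L_{2n+3}$ carries a wrong sign on $L_{2n+5}$ and would give $1408$ at $n=0$; likewise your $L_{2n+4}F_k + L_{2n+5}F_{k+1} = L_{2n+5+k}$ is the correct index, not the paper's $L_{2n+4+\xi(n)}$. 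These slips in the paper happen not to affect the final answer --- the correct value $(L_{6n+15}+2L_{2n+5})/5 = (3L_{6n+12}+2L_{6n+11}+2L_{2n+5})/5$ does lie in $[L_{6n+11}, L_{6n+12})$, which is exactly the sandwich you prove, with the upper bound reducing to $L_{2n+5} < L_{6n+10}$ as you say --- so the proposition stands and your version of the computation is the one that should appear in print.
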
    

\section{Further investigation}
Inspired by \cite{2}, one possible direction would be to find the Lebesgue measure of all real numbers whose finite greedy underapproximations $\sum_{i=1}^{m}1/F_{n_i}, m \in \mathbb{N},$ are eventually the best among all using reciprocals of Fibonacci numbers.

We would like to investigate the greedy two-term underapproximations using terms from a sequence $(c_n)_{n=0}^{\infty}$, where $(c_n)_{n=0}^{\infty}$ satisfies $c_n = ac_{n-1} + bc_{n-2}, \mbox{ for }a,b \in \mathbb{N} $ or a recurrence of higher order. One of the difficulties in solving this problem is how involved the algebra becomes without nice equations as in Lemma 2.2.

\section{Appendix}
\begin{proof}[Proof of \eqref{e1}]
Fix $m\in \mathbb{Z}$. We induct on $n$. Base cases: for $n = 0$, 
$$F_{-1}F_m +F_0 F_{m+1} \ =\ F_m\ =\ F_{0 +m};$$
for $n = 1$,
$$F_{0}F_m + F_1 F_{m+1}\ =\ F_{1+m}.$$
Inductive hypothesis: suppose that \eqref{e1} holds for $n = k\ge 1$. We have
\begin{align*}
F_{k+1+m}\ =\ F_{k+m} +F_{(k-1)+m}&\ =\ (F_{k-1}F_m + F_{k}F_{m+1}) + (F_{k-2}F_m +F_{k-1}F_{m+1})\\
&\ =\ (F_{k-1}+F_{k-2})F_m + (F_k +F_{k-1})F_{m+1}\\
&\ =\ F_k F_m +F_{k+1}F_{m+1}.
\end{align*}
Therefore, \eqref{e1} holds for all $n\ge 0$. 

We prove \eqref{e1} for $n < 0$. 
%Base case: for $n = -1$, 
%$$F_{-2}F_m +F_{-1}F_{m+1} \ =\ -F_m + F_{m+1} \ =\ F_{m-1}.$$
Inductive hypothesis: suppose that \eqref{e1} holds for all $n \ge k$ for some $k\le 0$. We have
\begin{align*}
 F_{k-1+m}\ =\ F_{k+1+m}-F_{k+m} &\ =\ (F_{k}F_m+F_{k+1}F_{m+1}) - (F_{k-1}F_m + F_{k}F_{m+1})\\
 &\ =\ (F_k - F_{k-1})F_m + (F_{k+1}-F_k)F_{m+1}\\
 &\ =\ F_{k-2}F_m + F_{k-1}F_{m+1}. 
\end{align*}
This completes our proof. 
\end{proof}

%%%%%%%%%%%%%%%%%%%%%%%%%%%%%%%%%%%%%%%%%%%%%%%%%%%%%%%%%%%%%%%%%%%%%%%%%%%%%%%%%%%%%%%%%%%%%%%%%%%%%%%%%%%%%%%%%%%%%%%%%%%%%%%%%%%%%%%%%%%%%%%%%%%%%%%%%%%%%%%%%%%%%%%%%%%%%%%%%%%%%%%%%%%%%%%%%%%%%%%%%%%%%%%%%%%%%%%%%%%%%%%%%%%%%%%%%%%%%%%%%%%%%%%%%%%%%%%%%%%%%%%%%%%%%%%%%%%%%%%%%%%%%%%%%%%%%%%%%%%%%%%%%%%%%%%%%%%%%%%%%%%%%%%%%%%%%%%%%%%%%%%%%%%%%%%%%%%%%%%%%%%

\ \\
\end{document}